\newcommand\BibTeX{{\rmfamily B\kern-.05em \textsc{i\kern-.025em b}\kern-.08em
T\kern-.1667em\lower.7ex\hbox{E}\kern-.125emX}}
\newcommand{\La}{\mathcal{L}}
\newtheorem{ex}{Example}
\begin{document}

\title{Stability of two-component incommensurate fractional-order systems and applications to the investigation of a FitzHugh-Nagumo neuronal model \protect\thanks{This work was supported by a grant of the Romanian National Authority for
Scientific Research and Innovation, CNCS-UEFISCDI, project no. PN-II-RU-TE-2014-4-0270.}}

\author[1,2]{Oana Brandibur}
\author[1,2]{Eva Kaslik*}

\authormark{OANA BRANDIBUR \& EVA KASLIK}

\address[1]{\orgdiv{Dept. of Math. and Comp. Sci.}, \orgname{West University of Timi\c{s}oara}, \country{Romania}}

\address[2]{\orgname{Institute e-Austria Timi\c{s}oara}, \country{Romania}}

\corres{*Eva Kaslik, \email{ekaslik@gmail.com}}


\abstract[Abstract]{
For two-dimensional autonomous linear incommensurate fractional-order dynamical systems with Caputo derivatives of different orders, necessary and sufficient conditions are obtained for the asymptotic stability and instability of the null solution. These conditions are expressed in terms of the elements of the system's matrix, as well as of the fractional orders of the Caputo derivatives, leading to a generalization of the well known Routh-Hurwitz conditions. These theoretical results are then used to investigate the stability properties of a two-dimensional fractional-order FitzHugh-Nagumo neuronal model. The occurrence of Hopf bifurcations is also discussed. Numerical simulations are provided with the aim of exemplifying the theoretical results, revealing rich spiking behavior, in comparison with the classical integer-order FitzHugh-Nagumo model.}

\keywords{Caputo derivative, FitzHugh-Nagumo, mathematical model, fractional order derivative, stability, instability, bifurcation, numerical simulation.}


\maketitle


\section{Introduction}

In many real world applications \cite{Cottone,Engheia,Henry_Wearne,Heymans_Bauwens,Mainardi_1996}, fractional-order dynamical systems have proven to provide more accurate and realistic results than their classical integer-order counterparts, due to the fact that fractional-order derivatives, as non-local operators, are able to reflect memory and hereditary properties. However, it is important to emphasize that important qualitative differences may appear when generalizing properties of integer-order dynamical systems to the fractional-order case, and such generalizations have to be done with great care.

Stability analysis is one of the most important research topics of the qualitative theory of fractional-order systems. Two recent surveys \cite{Li-survey,Rivero2013stability} provide comprehensive overviews of stability properties of fractional-order systems. In the particular case of linear autonomous commensurate fractional order systems, the most important starting point is Matignon's stability theorem \cite{Matignon}, which has been generalized in \cite{Sabatier2012stability}. Linearization theorems (or analogues of the classical Hartman-Grobman theorem) for fractional-order systems have been recently proved in \cite{Li_Ma_2013,Wang2016stability}. Up to this date, incommensurate order systems have not received as much attention as their commensurate order counterparts.  Linear incommensurate fractional order systems with rational orders have been analyzed in \cite{Petras2008stability}. Oscillations in two-dimensional incommensurate fractional order systems have been investigated in \cite{Datsko2012complex,Radwan2008fractional}. BIBO stability of systems with irrational transfer functions has been recently investigated in \cite{trachtler2016bibo}.

The first aim of this paper is to explore necessary and sufficient conditions for the asymptotic stability of two-dimensional linear autonomous incommensurate fractional-order systems with Caputo derivatives of different orders. An extended summary of these results has been given in \cite{Brandibur_CMMSE2017}. The theoretical results included in the following sections generalize the previous findings reported in \cite{Brandibur_2016}, concerning two-dimensional systems composed of a fractional-order differential equation and a classical first-order differential equation. These results are later applied to investigate stability properties of a fractional-order FitzHugh-Nagumo neuronal model. It is worth emphasizing that the fractional-order formulation of neuronal dynamics is strongly justified by experimental results concerning biological neurons \cite{Anastasio,Lundstrom}. Moreover, \cite{du2013measuring} suggests that a possible physical meaning of the order of a fractional derivative is that of an index of memory, which further justifies its use in mathematical models arising from neuroscience.

\section{Preliminaries}

Consider the  $n$-dimensional fractional-order system
with Caputo derivatives \begin{equation}\label{sys.gen}
^c\!D^\mathbf{q}\mathbf{x}(t)=f(t,\mathbf{x})
\end{equation}
where $\mathbf{q}=(q_1,q_2,...,q_n)\in(0,1)^n$ and $f:[0,\infty)\times\mathbb{R}^n\rightarrow \mathbb{R}^n$ is continuous on the whole domain of definition and Lipschitz-continuous with respect to the second variable, such that
$$f(t,0)=0\quad \textrm{for any }t\geq 0.$$
Let $\varphi(t,x_0)$ denote the unique solution of (\ref{sys.gen}) which satisfies the initial condition $x(0)=x_0\in\mathbb{R}^n$. The existence and uniqueness of the initial value problem associated to system (\ref{sys.gen}) is guaranteed by the properties of the function $f$ stated above \cite{Diethelm_book}.

Generally, the asymptotic stability of the trivial solution of system (\ref{sys.gen}) is not of exponential type \cite{Cermak2015,Gorenflo_Mainardi}, because of the presence of the memory effect. A special type of non-exponential asymptotic stability concept has been defined for fractional-order differential equations \cite{Li_Chen_Podlubny}, called Mittag-Leffler stability.  In this paper, we are concerned with  $\mathcal{O}(t^{-\alpha})$-asymptotic stability, which reflects the algebraic decay of the solutions.

\begin{definition}\label{def.stability}
The trivial solution of (\ref{sys.gen}) is called \emph{stable} if for any $\varepsilon>0$
there exists $\delta=\delta(\varepsilon)>0$ such that for every $x_0\in\mathbb{R}^n$ satisfying $\|x_0\|<\delta$ we have
$\|\varphi(t,x_0)\|\leq\varepsilon$ for any $t\geq 0$.

The trivial solution  of (\ref{sys.gen}) is called \emph{asymptotically stable} if it is stable and there
exists $\rho>0$ such that $\lim\limits_{t\rightarrow\infty}\varphi(t,x_0)=0$ whenever $\|x_0\|<\rho$.

Let $\alpha>0$. The trivial solution  of (\ref{sys.gen}) is called \emph{$\mathcal{O}(t^{-\alpha})$-asymptotically stable} if it is stable and there exists $\rho>0$ such that for any $\|x_0\|<\rho$ one has:
$$\|\varphi(t,x_0)\|=\mathcal{O}(t^{-\alpha})\quad\textrm{as }t\rightarrow\infty.$$
\end{definition}

\section{Stability results for linear systems with two Caputo derivatives of different orders}

The following two-dimensional linear autonomous incommensurate fractional-order system is considered:
\begin{equation}\label{linearsys}
\left\{
\begin{array}{l}
  ^c\!D^{q_1}x(t)=a_{11}x(t)+a_{12}y(t) \\
  ^c\!D^{q_2}x(t)=a_{21}x(t)+a_{22}y(t)
\end{array}
\right.
\end{equation}
where $A=(a_{ij})$ is a real 2-dimensional matrix and $q_1,q_2\in(0,1)$ are the fractional orders of the Caputo derivatives. We may assume $a_{12}a_{21}\neq 0$, because otherwise, one equation would be decoupled.

Applying the Laplace transform to system (\ref{linearsys}) we obtain the following system:
\begin{equation*}
\begin{bmatrix}
s^{q_1}X(s)-s^{q_1-1}x(0)\\s^{q_2}Y(s)-s^{q_2-1}y(0)
\end{bmatrix}=A\cdot \begin{bmatrix}
X(s)\\Y(s)
\end{bmatrix},
\end{equation*}
where $X(s)=\La(x)(s)$ and $Y(s)=\La(y)(s)$ represent the Laplace transforms of the functions $x$ and $y$, respectively, and $s^{q_1}$, $s^{q_2}$ represent the principal values (first branches) of the corresponding complex power functions \cite{Doetsch}. Therefore:
$$\left(\text{diag}(s^{q_1},s^{q_2})-A\right)\cdot\begin{bmatrix}
X(s)\\Y(s)
\end{bmatrix}=\begin{bmatrix}
s^{q_1-1} x(0)\\ s^{q_2-1}y(0)
\end{bmatrix}.$$
In the following, we denote
\begin{equation*}
\Delta_A(s)=\det\left(\text{diag}(s^{q_1},s^{q_2})-A\right)=s^{q_1+q_2}-a_{11}s^{q_2}-a_{22}s^{q_1}+\det(A)
\end{equation*}
and therefore, we obtain
\begin{equation}\label{transfer.functions}
X(s)=\frac{s^{q_1}(s^{q_2}-a_{22})x(0)+a_{12}s^{q_2}y(0)}{s\Delta_A(s)}\quad\text{and}\quad	Y(s)=\frac{s^{q_2}(s^{q_1}-a_{11})y(0)+a_{21}s^{q_1}x(0)}{s\Delta_A(s)}
\end{equation}

The following result provides necessary and sufficient conditions for the global asymptotic stability of system (\ref{linearsys}) as well as sufficient conditions for the instability of  system (\ref{linearsys}). The proof is based on the Final Value Theorem and asymptotic expansion properties of the Laplace transform \cite{Doetsch,Bonnet_2000,Brandibur_2016}.

\begin{theorem}\label{thm.lin.stab}$ $
\begin{enumerate}
\item Denoting $q=\min\{q_1,q_2\}$, system (\ref{linearsys}) is $\mathcal{O}(t^{-q})$-globally asymptotically stable if and only if all the roots of $\Delta_A(s)$ are in the open left half-plane ($\Re(s)<0$).
\item If $\det(A)\neq 0$ and $\Delta_A(s)$ has a root in the open right half-place ($\Re(s)>0$), system (\ref{linearsys}) is unstable.
\end{enumerate}
\end{theorem}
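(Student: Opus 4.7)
The plan is to analyze the inverse Laplace transforms of the explicit formulas (\ref{transfer.functions}) by contour integration. Because $s^{q_1}$ and $s^{q_2}$ turn $s=0$ into a branch point, I fix the branch cut along $(-\infty,0]$ and deform the vertical Bromwich line into a Hankel-type keyhole contour that wraps the cut (a small circle around the origin, two horizontal rays hugging the cut, and closing large arcs). The long-time behavior of $x(t)$ and $y(t)$ then splits into two contributions: the residues at those zeros of $\Delta_A$ caught between the Bromwich line and the keyhole, and the keyhole integral itself, whose asymptotics are controlled by the local behavior of $X(s)$ and $Y(s)$ near the branch point.

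For the sufficient direction of part~1, I would assume that every root of $\Delta_A$ lies in $\{\Re(s)<0\}$; in particular $\det(A)=\Delta_A(0)\ne 0$. Then the nearest root sits at some distance $\sigma>0$ from the imaginary axis, so the residues produced by these roots decay like $e^{-\sigma t}$, while the contributions from the large outer arcs and from the small circle around $s=0$ vanish in the limit thanks to the decay $|X(s)|,|Y(s)|=\mathcal{O}(|s|^{-1})$ as $|s|\to\infty$ combined with Jordan's lemma. Near $s=0$, the numerator of $X$ in (\ref{transfer.functions}) expands as a combination of $s^{q_1}$ and $s^{q_2}$ terms while $s\Delta_A(s)\sim \det(A)\cdot s$, so the keyhole integral reduces to that of a linear combination of powers $s^{q_i-1}$ around the cut, whose standard Hankel representation via $1/\Gamma$ contributes a term of order $t^{-q_i}$. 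The slower of the two dominates, giving the claimed $\mathcal{O}(t^{-q})$ decay with $q=\min\{q_1,q_2\}$ and hence both stability and the sharp algebraic rate.

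For the necessity in part~1 and for part~2, let $s_0$ be a root of $\Delta_A$ with $\Re(s_0)\ge 0$ and $s_0\ne 0$ (which is automatic in part~2 since $\det(A)\ne 0$ excludes $s_0=0$). I would choose the Bromwich abscissa to the right of $\Re(s_0)$; the deformation onto the keyhole contour now picks up the residue at $s_0$, contributing a term proportional to $e^{s_0 t}$. That residue is nontrivial for suitable initial data: the assumption $a_{12}a_{21}\ne 0$ lets one take, e.g., $x(0)=0$ and $y(0)\ne 0$, making the numerator of $X$ in (\ref{transfer.functions}) equal to $a_{12}s^{q_2}y(0)$, which is nonzero at $s_0\ne 0$. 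When $\Re(s_0)>0$ this produces unbounded exponential growth of $x(t)$, giving instability (part~2); when $\Re(s_0)=0$ the same mechanism produces an undamped oscillation, which rules out asymptotic stability and closes the equivalence in part~1.

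The main obstacle will be the rigorous justification of the contour deformation — specifically bounding the integrals along the large outer arcs and along the small circle around the branch point at $s=0$, and then extracting the exact leading coefficient of the Hankel contribution from the $s\to 0$ expansion in order to obtain the sharp rate $\mathcal{O}(t^{-q})$ rather than merely a weaker algebraic bound. Once those analytic estimates are in place, the residue calculations that supply either the decaying exponentials in the stable case or the growing mode in the unstable case are a direct application of the residue theorem, and no genuinely new fractional-calculus machinery beyond what is recalled in \cite{Doetsch,Bonnet_2000,Brandibur_2016} is required.
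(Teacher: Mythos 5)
Your plan is essentially the paper's own argument with the black boxes opened up: the paper delegates the large-time asymptotics to Doetsch's Theorems 35.1 and 37.1 (whose proofs are exactly the Hankel/keyhole deformation you describe) and to the Final Value Theorem, and it gets the finiteness of the zeros of $\Delta_A$ in $\mathbb{C}\setminus\mathbb{R}_-$ --- which you need for both the contour deformation and the ``dominant root'' residue argument --- from Proposition~3.1 of \cite{Bonnet_2002}. So the sufficiency direction and Part~2 are sound modulo the analytic estimates you already flag, plus one point you should not wave away: a single nonzero residue at a dominant root $s_0$ does not by itself give unboundedness or persistent oscillation, because several roots can share the maximal real part; you need the standard observation that a finite sum $\sum_j p_j(t)e^{i\omega_j t}$ with distinct frequencies and conjugate-symmetric, not-all-zero coefficients cannot tend to zero (almost periodicity), so that the dominant-level contribution survives against the decaying residues and the keyhole term.

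The genuine gap is in the necessity of Part~1. You explicitly restrict to roots $s_0$ with $\Re(s_0)\ge 0$ and $s_0\neq 0$, but the ``only if'' statement also requires you to show that the system fails to be asymptotically stable when $\Delta_A(0)=\det(A)=0$, i.e.\ when the origin itself is a root --- and this can occur with every other root in the open left half-plane (take $a_{11},a_{22}<0$ with $a_{11}a_{22}=a_{12}a_{21}$). Your residue-at-$s_0$ mechanism does not apply there: $s=0$ is a branch point, not a pole in $\mathbb{C}\setminus\mathbb{R}_-$. The paper treats this case separately by computing $\lim_{s\to 0}sX(s)$, which equals $x_0$ if $a_{22}\neq 0$ and $-a_{12}y_0/a_{11}$ if $a_{22}=0$, hence is nonzero for suitable initial data; by the Final Value Theorem $x(t)$ then cannot converge to $0$. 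Equivalently, in your language, when $\det(A)=0$ the local expansion of $X$ near the branch point acquires a genuine $x_0/s$ (or constant-multiple-of-$1/s$) term, so the small circle around the origin contributes a non-decaying constant rather than vanishing. You need to add this case to close the equivalence.
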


\begin{proof}
Without loss of generality, we consider $0<q_1\leq q_2<1$.

\emph{Part 1 - Necessity.} 
	Assuming that (\ref{linearsys}) is $\mathcal{O}(t^{-q})$-globally asymptotically stable and letting $(x(t),y(t))$ denote the solution of system (\ref{linearsys}) which satisfies the initial condition $(x(0),y(0))=(x_0,y_0)\in\mathbb{R}^2$ such that $x_0,y_0\ne 0$, there exist $M>0$ and $T>0$ such that
	$$|x(t)|\leq\|(x(t),y(t))\|\leq Mt^{-q},\ \forall \ t\geq T.$$
Hence, the Laplace transform  
$$X(s)=\frac{x_0s^{q_1}(s^{q_2}-a_{22})+a_{12}y_0s^{q_2}}{s\Delta_A(s)}$$
is absolutely continuous and holomorphic in the open right half-plane, therefore, it does not have any poles in the open right half-plane. The function from the numerator is holomorphic on $\mathbb{C}\setminus\{s\in\mathbb{R},s\leq 0 \}$ and $\Delta_A(s)\ne 0$, for any $s\in\mathbb{C}$, $\Re(s)>0$.

Assuming that $\Delta_A(0)=0$, we obtain that $\det(A)=0$ and $\Delta_A(s)=s^{q_1+q_2}-a_{11}s^{q_2}-a_{22}s^{q_1}.$ Hence
	\begin{align*}
	\lim\limits_{s\rightarrow 0}sX(s)&=\lim\limits_{s\rightarrow 0}s\cdot \frac{x_0s^{q_1}(s^{q_2}-a_{22})+a_{12}y_0s^{q_2}}{s\Delta_A(s)}=\\
	&=\lim\limits_{s\rightarrow 0}\frac{x_0s^{q_1+q_2}-x_0a_{22}s^{q_1}+a_{12}y_0s^{q_2}}{s^{q_1+q_2}-a_{11}s^{q_2}-a_{22}s^{q_1}}=\\
	&=\lim\limits_{s\rightarrow 0}\frac{x_0s^{q_2}-x_0a_{22}+a_{12}y_0s^{q_2-q_1}}{s^{q_2}-a_{11}s^{q_2-q_1}-a_{22}}=\\
	&=\left\{
	\begin{array}{ll}
	x_0, & \textrm{if }a_{22}\neq 0 \\
	-\frac{a_{12}}{a_{11}}y_0, & \textrm{if }a_{22}=0
	\end{array}
	\right.\quad\neq 0,
	\end{align*}
This contradicts the Final Value Theorem for the Laplace transform $X(s)$ because $x(t)\rightarrow 0$ as $t\rightarrow\infty$. In conclusion, $\Delta_A(0)\neq 0$.

Let us now consider the solution $(x(t),y(t))$  of system (\ref{linearsys}) which satisfies the initial condition $(x(0),y(0))=\left(0,\frac{1}{a_{12}}\right)$. For $x(t)$ we obtain the Laplace transform $X(s)=\left[s^{1-q_2}\Delta_A(s)\right]^{-1}$. Assuming that $\Delta_A(s)$ has a root on the imaginary axis (but not at the origin), it follows that $X(s)$ has a pole on the imaginary axis, which implies that $x(t)$ has persistent oscillations, contradicting the convergence of $x(t)$ to the limit $0$, as $t\rightarrow\infty$.

Therefore, we obtain $\Delta_A(s)\neq 0$, for any $s\in\mathbb{C}$, $\Re(s)\geq 0$.
    
\emph{Part 1 - Sufficiency.} Let $(x(t),y(t))$ denote the solution of system (\ref{linearsys}) which satisfies the initial condition $(x(0),y(0))=(x_0,y_0)\in\mathbb{R}^2$. Assuming that all the roots of $\Delta_A(s)$ are in the open left half-plane, it follows that all the poles of the Laplace transforms functions $X(s)$ and $Y(s)$ given by (\ref{transfer.functions}) are either in the open left half-plane or at the origin, and $X(s)$ and $Y(s)$ have at most a single pole at the origin. A simple application of the Final Value Theorem of the Laplace transform \cite{Chen_Lundberg} yields
\begin{align*}
\lim_{t\rightarrow\infty}x(t)&=\lim_{s\rightarrow 0}sX(s)=\lim_{s\rightarrow 0}\frac{s^{q_1}(s^{q_2}-a_{22})x(0)+a_{12}s^{q_2}y(0)}{\Delta_A(s)}=0;\\
\lim_{t\rightarrow\infty}y(t)&=\lim_{s\rightarrow 0}sY(s)=\lim_{s\rightarrow 0}\frac{s^{q_2}(s^{q_1}-a_{11})y(0)+a_{21}s^{q_1}x(0)}{\Delta_A(s)}=0.
\end{align*}
The Laplace transform $X(s)$ is holomorphic in the right half-plane, except at the origin and has the asymptotic expansion
$$X(s)\sim \sum_{n=0}^\infty c_n s^{\lambda_n},\quad \textrm{as } s\rightarrow 0, $$
where $\lambda_0=q_1-1<\lambda_1<...<\lambda_n<...$. Based o Theorem 37.1 from \cite{Doetsch}, this leads to the following asymptotic expansion:
$$x(t)\sim \sum_{n=0}^\infty \frac{c_n}{\Gamma(-\lambda_n)}\frac{1}{t^{\lambda_n+1}} ,\quad \textrm{as } t\rightarrow \infty, $$
where $\Gamma$ represents the Gamma function with the convention
$$\frac{1}{\Gamma(-\lambda_n)}=0\quad \textrm{if }\lambda_n\in\mathbb{Z}_+.$$
As $\lambda_0+1=q_1$, it follows that $x(t)$ converges to $0$ as $t^{-q_1}$. Similarly, we also obtain that $y(t)$ converges to $0$ as $t^{-q_1}$. Combining the convergence results for the two components $x(t)$ and $y(t)$, it follows, based on Definition \ref{def.stability} that system (\ref{linearsys}) is $\mathcal{O}(t^{-q})$-globally asymptotically stable, where $q=\min\{q_1,q_2\}$.

\emph{Part 2.} Assume that $\det(A)\neq 0$, which is equivalent to $\Delta_A(0)\neq 0$. Consider the solution of $(x(t),y(t))$  of system (\ref{linearsys}) which satisfies the initial condition $(x(0),y(0))=(0,y_0)$, with an arbitrary $y_0\in\mathbb{R}^\star$. The Laplace transform of $x(t)$ is $X(s)=a_{12}y_0\left[s^{1-q_2}\Delta_A(s)\right]^{-1}$. Based on Proposition 3.1 from \cite{Bonnet_2002}, it follows that $\Delta_A(s)$ has a finite number of roots in $\mathbb{C}\setminus\mathbb{R}_{-}$, and in particular, in the open right half-plane. Obviously, the Laplace transform $X(s)$ is analytic in $\mathbb{C}\setminus\mathbb{R}_{-}$, except at the poles given by the roots of $\Delta_A(s)$.

If $\Delta_A(s)$ has at least one root in the open right half-plane, let us denote by $\rho>0$ the real part of a dominant pole of $X(s)$, i.e. $\rho=\max\{\Re(s):\Delta_A(s)=0\}$, and by $\nu\geq 1$ the largest order of a dominant pole. Following Theorem 35.1 from \cite{Doetsch}, we obtain that $|x(t)|$ is asymptotically equal to $k~t^{\nu-1} e^{\rho t}$ (with $k>0$) as $t\rightarrow \infty$. Hence, $x(t)$ is unbounded and therefore, system (\ref{linearsys}) is unstable.
\end{proof}

\begin{lemma}\label{lem.a.star}
Let $c>0$, $0<q_1<q_2\leq 1$, and consider the smooth parametric curve in the $(b,a)$-plane defined by
$$
\Gamma_{c,q1,q2}~:\quad
\begin{cases}
b=b(\omega)=\rho_1\omega^{q_2}-c\rho_2\omega^{-q_1}\\
a=a(\omega)=c\rho_1\omega^{-q_2}-\rho_2\omega^{q_1}
\end{cases},\quad \omega>0,
$$
where:
$$\rho_1=\frac{\sin\frac{q_1\pi}{2}}{\sin\frac{(q_2-q_1)\pi}{2}}> 0\quad,\quad \rho_2=\frac{\sin\frac{q_2\pi}{2}}{\sin\frac{(q_2-q_1)\pi}{2}}>1~.$$
The curve $\Gamma_{c,q1,q2}$ is the graph of a smooth, decreasing and convex bijective function $a^\star=a^\star_{c,q_1,q_2}:\mathbb{R}\rightarrow\mathbb{R}$ which satisfies the inequalities 
\begin{equation}\label{ineg.a.star}
\begin{cases}
a^\star(b)\leq (-b)^\frac{q_2}{q_1}c^{\left(1-\frac{q_2}{q_1}\right)} &\textrm{, if }b<0\\
a^\star(b)\leq -b^{\frac{q_1}{q_2}} &\textrm{, if }b\geq 0
\end{cases}
\end{equation}\end{lemma}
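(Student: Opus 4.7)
The three claims to verify are: (i) that $a^\star$ is a smooth, strictly decreasing bijection of $\mathbb{R}$, (ii) that $a^\star$ is convex, and (iii) the two upper bounds in (\ref{ineg.a.star}).

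Direct differentiation of the parametrization gives
$$b'(\omega)=q_2\rho_1\omega^{q_2-1}+cq_1\rho_2\omega^{-q_1-1}>0,\qquad a'(\omega)=-cq_2\rho_1\omega^{-q_2-1}-q_1\rho_2\omega^{q_1-1}<0,$$
which, together with $b(0^+)=-\infty$ and $b(+\infty)=+\infty$, shows that $b$ is a smooth bijection of $(0,\infty)$ onto $\mathbb{R}$ and that $a^\star=a\circ b^{-1}$ is smooth and strictly decreasing. For convexity I would use the parametric formula $\tfrac{d^2a}{db^2}=(a''b'-a'b'')/(b')^3$; expanding and collecting powers of $\omega$ reduces the numerator to
$$a''b'-a'b''=2c\bigl(q_2^3\rho_1^2-q_1^3\rho_2^2\bigr)\omega^{-3}+c^2q_1q_2(q_2-q_1)\rho_1\rho_2\,\omega^{-q_1-q_2-3}+q_1q_2(q_2-q_1)\rho_1\rho_2\,\omega^{q_1+q_2-3}.$$
The last two terms are positive because $q_2>q_1$; for the first, the inequality $q_2^3\rho_1^2\geq q_1^3\rho_2^2$ reduces to $(q_2/q_1)^{3/2}\geq\sin(q_2\pi/2)/\sin(q_1\pi/2)$, which follows from the fact that $q\mapsto\sin(q\pi/2)/q$ is decreasing on $(0,1]$ (it gives the stronger bound $\sin(q_2\pi/2)/\sin(q_1\pi/2)\leq q_2/q_1\leq(q_2/q_1)^{3/2}$).

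For the inequalities I would first eliminate $c$ by a scaling. The substitution $\omega=c^{1/(q_1+q_2)}u$ yields
$$b=c^{q_2/(q_1+q_2)}\bigl(\rho_1u^{q_2}-\rho_2u^{-q_1}\bigr),\qquad a=c^{q_1/(q_1+q_2)}\bigl(\rho_1u^{-q_2}-\rho_2u^{q_1}\bigr),$$
so $\Gamma_{c,q_1,q_2}$ is a coordinatewise rescaling of $\Gamma_{1,q_1,q_2}$, and a direct check of exponents shows that both inequalities in (\ref{ineg.a.star}) reduce to the corresponding statements for $c=1$. For $c=1$ the involution $u\mapsto 1/u$ interchanges the roles of $a$ and $b$, so $\Gamma_{1,q_1,q_2}$ is symmetric with respect to the diagonal $a=b$, and under this symmetry the two inequalities are swapped. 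It therefore suffices to prove just one of them, say $\tilde a(u)\leq-\tilde b(u)^{q_1/q_2}$ on the range $u\geq u_0:=(\rho_2/\rho_1)^{1/(q_1+q_2)}$ where $\tilde b(u)\geq 0$ (here $\tilde a,\tilde b$ denote the parametrization for $c=1$).

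The main obstacle is then to show that $\phi(u):=\tilde a(u)+\tilde b(u)^{q_1/q_2}\leq 0$ on $[u_0,\infty)$. The endpoint values are easy: $\phi(u_0)=\tilde a(u_0)=u_0^{q_1}(\rho_1^2-\rho_2^2)/\rho_2<0$ because $\rho_1<\rho_2$, and the leading term as $u\to\infty$ is $(\rho_1^{q_1/q_2}-\rho_2)u^{q_1}<0$ since $\rho_1^{q_1/q_2}<\rho_2$ in both regimes $\rho_1\leq 1$ and $\rho_1>1$. To rule out an interior positive maximum I would use the critical-point equation $\phi'(u^*)=0$ to eliminate $\tilde b(u^*)^{q_1/q_2}$, reducing the claim $\phi(u^*)\leq 0$ to the sign inequality
$$q_2\rho_1^2-q_1\rho_2^2-(q_2-q_1)\rho_1\rho_2\,(u^*)^{-q_1-q_2}\leq 0$$
at every critical point $u^*\geq u_0$. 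This is the delicate step: a naive monotonicity argument on the bracket is not sufficient when $q_2\rho_1^2>q_1\rho_2^2$, and the sign check must combine the defining equation of $u^*$ with the trigonometric structure of $\rho_1$ and $\rho_2$.
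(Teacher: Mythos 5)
The monotonicity, bijectivity and convexity parts of your argument are correct and essentially identical to the paper's (the paper also verifies $q_2^3\rho_1^2\geq q_1^3\rho_2^2$ via the monotonicity of $x\mapsto\frac{\sin x}{x}$, phrased as $\frac{q_1}{q_2}\leq\frac{\rho_1}{\rho_2}$). Your reduction of the two inequalities in (\ref{ineg.a.star}) to a single one is also valid and rather elegant: the rescaling $\omega=c^{1/(q_1+q_2)}u$ does normalize $c$ to $1$ with the exponents matching, and the involution $u\mapsto 1/u$ does swap $\tilde a$ and $\tilde b$, so the two bounds are exchanged by the diagonal symmetry. The paper achieves the same reduction differently, by showing that both inequalities collapse, after the substitutions $u=c\omega^{-q_1-q_2}$ and $u=c^{-1}\omega^{q_1+q_2}$ respectively, onto the single inequality $(\rho_1-u\rho_2)^{q_1}\leq(\rho_2-u\rho_1)^{q_2}$ for $u\in(0,\rho_1/\rho_2]$.

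The genuine gap is that you never prove the one inequality everything has been reduced to. You check the values of $\phi$ at $u_0$ and at infinity and then propose to exclude an interior positive maximum via the critical-point identity, but you explicitly concede that the resulting sign condition $q_2\rho_1^2-q_1\rho_2^2-(q_2-q_1)\rho_1\rho_2(u^*)^{-q_1-q_2}\leq 0$ cannot be settled by monotonicity alone when $q_2\rho_1^2>q_1\rho_2^2$ — and that case really does occur (e.g. $q_1$ close to $q_2$ near $1$, where $\sin^2(q\pi/2)/q$ is decreasing), so the bracket is positive for all large $u$ and one would additionally have to show that no critical point lies in that range. This is the heart of the lemma and it is left open. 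The paper closes it by a different, fully elementary chain: after the substitution the target becomes $(\rho_1-u\rho_2)^{q_1}\leq(\rho_2-u\rho_1)^{q_2}$ on $(0,\rho_1/\rho_2]$; Bernoulli's inequality bounds $(\rho_1-u\rho_2)^{q_1/q_2}$ by an expression affine in $u$, so it suffices to verify the resulting linear inequality at the endpoint $u=\rho_1/\rho_2$, where it reduces to $\sin\frac{(q_1+q_2)\pi}{2}\geq(1-\frac{q_1}{q_2})\sin\frac{q_2\pi}{2}$, which follows from Jensen's inequality for the concave sine. You would need to supply an argument of comparable strength (or adopt this one) for your critical-point step; as written, the proof of (\ref{ineg.a.star}) is incomplete.
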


\begin{proof}
It is easy to see that for any $\omega>0$ one has:
\begin{align*}
b'(\omega)&=\rho_1q_2\omega^{q_2-1}+c\rho_2q_1\omega^{-q_1-1}>0\\
a'(\omega)&=-c\rho_1q_2\omega^{-q_2-1}-\rho_2q_1\omega^{q_1-1}<0
\end{align*}
Moreover, $a(0_+)=b(\infty)=\infty$ and $a(\infty)=b(0_+)=-\infty$, and hence, $\Gamma_{c,q_1,q_2}$ is the graph of a smooth decreasing bijective function $a^\star=a\circ b^{-1}:\mathbb{R}\rightarrow \mathbb{R}$ such that $a^\star(\pm\infty)=\mp\infty$. 

Furthermore, we compute: 
$$a''(\omega)b'(\omega)-a'(\omega)b''(\omega)=\omega^{-q_1-q_2-3} \left(\rho_1\rho_2q_1 q_2(q_2-q_1)(c^2+\omega^{2(q_1+q_2)})+2 c \omega^{q_1+q_2} (q_2^3 \rho_1^2-q_1^3 \rho_2^2)\right).$$
This expression is strictly positive, as $\frac{q_1}{q_2}\leq \frac{\rho_1}{\rho_2}\leq 1$ (which follows from the fact that the function $x\mapsto\frac{\sin x}{x}$ is decreasing on $(0,\pi)$). Therefore, as $b'(\omega)>0$, it follows by the chain rule that $\frac{d^2a^\star}{db^2}>0$, for any $b\in\mathbb{R}$, and hence, $a^\star$ is a convex function. 

Let us denote $\rho=\frac{\rho_1}{\rho_2}\in(0,1)$. We note that the root of the equation $a(\omega)=0$ is $\omega_a=(c\rho)^{\frac{1}{q_1+q_2}}$, while the root of $b(\omega)=0$ is $\omega_b=(c\rho^{-1})^{\frac{1}{q_1+q_2}}$. We have $\omega_a<\omega_b$ and hence, $a^\star(0)<0$.

We will first prove the inequality $a^\star(b)\leq -b^{\frac{q_1}{q_2}}$ for any $b\geq 0$, which is equivalent to $(-a(\omega))^{q_2}\geq b(\omega)^{q_1}$, for any $\omega\geq \omega_b$. Denoting $u=c\omega^{-q_1-q_2}\in(0,\rho]$, this further simplifies to proving the inequality
\begin{equation}\label{ineg.dem}
(\rho_1-u\rho_2)^{q_1}\leq (\rho_2-u\rho_1)^{q_2},\quad\forall~u\in(0,\rho].
\end{equation}
To prove inequality (\ref{ineg.dem}), we first apply Bernoulli's inequality and obtain
$$(\rho_1-u\rho_2)^{\frac{q_1}{q_2}}\leq 1+\frac{q_1}{q_2}(\rho_1-u\rho_2-1),\quad \forall~~u\in(0,\rho].$$
We now have to show that 
$$ 1+\frac{q_1}{q_2}(\rho_1-u\rho_2-1)\leq \rho_2-u\rho_1,\quad \forall~~u\in(0,\rho]$$
or equivalently 
$$u(\rho_1-r\rho_2)\leq \rho_2-1+r(1-\rho_1),\quad \forall~~u\in(0,\rho].$$
where $r=\frac{q_1}{q_2}\leq\rho$. This inequality holds if and only it is true for $u=\rho$, or equivalently: 
$$\sin\left(\frac{(q_1+q_2)\pi}{2}\right)\geq (1-r)\sin\left(\frac{q_2\pi}{2}\right),~\quad\forall~0<q_1<q_2\leq 1.$$
The above inequality follows from Jensen's inequality applied to the concave sine function: 
$$(1-r)\sin\left(\frac{q_2\pi}{2}\right)+r\sin(q_2\pi)\leq \sin\left((1-r)\frac{q_2\pi}{2}+r q_2\pi\right)=\sin\left(\frac{(q_1+q_2)\pi}{2}\right).$$

The other inequality, $a^\star(b)\leq (-b)^\frac{q_2}{q_1}c^{\left(1-\frac{q_2}{q_1}\right)}$ for any $b<0$, is equivalent to $a(\omega)^{q_1}\leq (-b(\omega))^{q_2}c^{q_1-q_2}$, for any $\omega<\omega_a$. Denoting $u=c^{-1}\omega^{q_1+q_2}\in(0,\rho)$, this further simplifies to inequality (\ref{ineg.dem}) which has been proved above. 
\end{proof}

With the aim of exploring the distribution of the roots of the characteristic function $\Delta_A(s)$ given above, the following result is given, which is a generalization of Proposition 2 from \cite{Brandibur_2016}.

\begin{proposition}\label{prop112}
	Consider the complex-valued function
	$$\Delta (s)=s^{q_1+q_2}+as^{q_2}+bs^{q_1}+c,$$
	where $0<q_1<q_2<1$, $s^{q_1}$ and $s^{q_2}$ represent the principal values (first branches) of the corresponding complex power functions and $a,b,c\in\mathbb{R}$.
	\begin{enumerate}
		\item If $c<0$, then $\Delta(s)$ has at least one positive real root.
		\item $\Delta(0)=0$ if and only if $c=0$.
		\item Assume that $c>0$.
		\begin{enumerate}
			\item If $a\geq 0$ and $b\geq 0$ then all the roots of $\Delta(s)$ satisfy $\Re(s)<0$.
			\item $\Delta(s)$ has a pair of pure imaginary roots if and only if
$$a=a^\star_{c,q_1,q_2}(b):=a^\star(b,c,q_1,q_2),$$
where $a^\star_{c,q_1,q_2}$ is the function given by Lemma \ref{lem.a.star}.
			\item If $s(a,b,c,q_1,q_2)$ is one of the roots of $\Delta(s)$ such that
			$$\Re(s(a^\star,b,c,q_1,q_2))=0,$$ where $a^\star=a^\star(b,c,q_1,q_2)$ defined at (b), the following transversality condition is satisfied:
			$$\frac{\partial \Re(s)}{\partial a}\Big|_{a=a^*}<0.$$
			\item All roots of $\Delta(s)$ are in the left half-plane if and only if $a>a^\star(b,c,q_1,q_2)$.
			\item $\Delta(s)$ has a pair of roots in the right half-plane if and only if $a<a^\star(b,c,q_1,q_2)$.
		\end{enumerate}
	\end{enumerate}
\end{proposition}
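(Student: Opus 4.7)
The plan is to dispatch items 1--3 in order, reducing the heart of the argument to the parametric curve already analysed in Lemma~\ref{lem.a.star}, and then to pass from the location of the pure-imaginary roots to a root count by continuity plus transversality.

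Items 1, 2, and 3(a) are direct. For Part 1, the restriction of $\Delta$ to $(0,\infty)$ is continuous with $\Delta(0)=c<0$ and $\Delta(s)\to+\infty$, so the intermediate value theorem produces a positive real root. Part 2 is immediate from $\Delta(0)=c$. For Part 3(a), with $c>0$ and $a,b\geq 0$, I would argue on arguments: for $s=re^{i\theta}$ with $r>0$ and $\theta\in(0,\pi/2]$, the imaginary part of $\Delta(s)$ equals $r^{q_1+q_2}\sin((q_1+q_2)\theta)+ar^{q_2}\sin(q_2\theta)+br^{q_1}\sin(q_1\theta)$, which is strictly positive since the first summand is and the others are nonnegative; the cases $\theta=0$ and $\theta\in[-\pi/2,0)$ are handled by positivity and complex conjugation respectively.

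For Part 3(b), I would plug $s=i\omega$, $\omega>0$, into $\Delta(s)=0$ and separate real and imaginary parts, obtaining a $2\times 2$ linear system in $(a,b)$ whose determinant equals $-\omega^{q_1+q_2}\sin\frac{(q_2-q_1)\pi}{2}\neq 0$. Solving by Cramer's rule and simplifying with the identity $\sin\bigl(\frac{(q_1+q_2)\pi}{2}-\frac{q_j\pi}{2}\bigr)=\sin\frac{q_{3-j}\pi}{2}$ yields exactly the parametrization $(b(\omega),a(\omega))$ of Lemma~\ref{lem.a.star}, so the imaginary-axis locus is precisely the graph of $a^\star$.

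For Part 3(c) I would differentiate $\Delta(s(a))\equiv 0$ implicitly to get $\partial s/\partial a=-s^{q_2}/\Delta'(s)$. At $s=i\omega$, the substitutions $s^{q_j-1}=-is^{q_j}/\omega$ together with the elimination of $s^{q_1+q_2}$ via $\Delta(i\omega)=0$ collapse $\Delta'(i\omega)$ to $\frac{i}{\omega}\bigl(aq_1 s^{q_2}+bq_2 s^{q_1}+(q_1+q_2)c\bigr)$. Computing $\Re(s^{q_2}/\Delta'(i\omega))$ with the angle-subtraction identity then produces in the numerator $bq_2\omega^{q_1}\sin\frac{(q_2-q_1)\pi}{2}+(q_1+q_2)c\sin\frac{q_2\pi}{2}$, and substituting the critical relation $b\omega^{q_1}=\omega^{q_1+q_2}\rho_1-c\rho_2$ from 3(b) together with $\rho_j\sin\frac{(q_2-q_1)\pi}{2}=\sin\frac{q_j\pi}{2}$ collapses it to $q_2\omega^{q_1+q_2}\sin\frac{q_1\pi}{2}+q_1 c\sin\frac{q_2\pi}{2}>0$; hence $\Re(\partial s/\partial a)<0$. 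I expect this step to be the main obstacle: the $a$- and $b$-dependent terms do not obviously cancel, and it is exactly the substitution of the critical-value relation from (b) that makes the sign unambiguous.

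Parts 3(d) and 3(e) then follow by a connectedness argument. By Proposition~3.1 of~\cite{Bonnet_2002} the roots of $\Delta$ in $\mathbb{C}\setminus\mathbb{R}_-$ are finite in number and depend continuously on the coefficients; by 3(b) they can cross the imaginary axis only on the curve $\Gamma_{c,q_1,q_2}$, which by Lemma~\ref{lem.a.star} divides the $(b,a)$-plane into the two connected regions $\{a>a^\star(b)\}$ and $\{a<a^\star(b)\}$. In each region the number of right-half-plane roots is therefore constant; it equals zero in the upper region, because for any $b\geq 0$ and $a$ large enough Part 3(a) applies, and by 3(c) the crossing increases this count by two as $a$ decreases through $a^\star(b)$, yielding a pair of right-half-plane roots in the lower region.
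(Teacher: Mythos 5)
Your proposal is correct and matches the paper's proof in all essentials: the same intermediate-value argument for part 1, the same reduction of the imaginary-axis locus to the curve $\Gamma_{c,q1,q2}$ of Lemma \ref{lem.a.star} by splitting $\Delta(i\omega)=0$ into real and imaginary parts, the same implicit differentiation for 3(c) --- your elimination of $s^{q_1+q_2}$ via $\Delta(i\omega)=0$ and the paper's direct evaluation of $\Re\left(i^{q_2}\overline{P(\omega)}\right)$ arrive, after substituting $b=b(\omega)$ and $\rho_j\sin\frac{(q_2-q_1)\pi}{2}=\sin\frac{q_j\pi}{2}$, at the same strictly positive expression up to a positive power of $\omega$ --- and the same crossing-plus-continuity argument for (d)--(e), which you in fact justify more carefully than the paper (explicit appeal to the finiteness of roots in $\mathbb{C}\setminus\mathbb{R}_-$, connectedness of the two regions, and uniqueness of the crossing pair via the monotonicity of $b(\omega)$). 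The only genuine divergence is 3(a), where you show $\Im\Delta(s)>0$ for $\arg s\in(0,\pi/2]$ and conclude by conjugation, whereas the paper computes the sign of $\Re(s^{q_1})$ from the identity $s^{q_1}=(-as^{q_2}-c)/(s^{q_2}+b)$; both arguments are valid and yours is arguably the more economical.
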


\begin{proof}

1. Since $\Delta(0)=c<0$ and $\Delta(\infty)=\infty$, due to the fact that $\Delta(s)$ is continuous on $(0,\infty)$, it results that it has at least one strictly positive real root.
	
\noindent 2. As $\Delta(0)=c$, it is easy to see that $\Delta(0)=0\Leftrightarrow c=0$.
	
\noindent 3.(a) Let $a\geq 0$, $b\geq 0$ and $c>0$. If $\Delta(s)$ had a root $s$ with $\Re(s)\geq 0$, then
	$$|\arg(s)|\leq\frac{\pi}{2}\Rightarrow
	|\arg(s^{q_i})|=q_i\cdot |\arg(s)|\leq\frac{q_i\pi}{2}<\frac{\pi}{2},~~ i\in\{1,2\}
	$$
	So $\Re(s^{q_1})>0$ and $\Re(s^{q_2})>0$. Moreover, we have
	$$s^{q_1+q_2}+as^{q_2}+bs^{q_1}+c=0\Leftrightarrow s^{q_1}=\frac{-as^{q_2}-c}{s^{q_2}+b}$$
	and we obtain
	\begin{align*} \Re(s^{q_1})&=\Re\Big(\frac{-as^{q_2}-c}{s^{q_2}+b}\Big)=\Re\left[\frac{(-as^{q_2}-c)(\bar{s}^{q_2}+b)}{|s^{q_2}+b|^2}\right]\\
	&=\frac{\Re\big[(-as^{q_2}-c)(\bar{s}^{q_2}+b)\big]}{|s^{q_2}+b|^2}\\
	&=\frac{\Re(-as^{q_2}-c)\Re(\bar{s}^{q_2}+b)-\Im(-as^{q_2}-c)\Im(\bar{s}^{q_2}+b)}{|s^{q_2}+b|^2}\\
	&=\frac{(-a\Re(s^{q_2})-c)(\Re(s^{q_2})+b)+a\Im(s^{q_2})(-\Im(s^{q_2}))}{|s^{q_2}+b|^2}\\
	&=\frac{-a|s|^{2q_2}-(ab+c)\Re(s^{q_2})-bc }{|s^{q_2}+b|^2}.
	\end{align*}
	Because $a\geq 0, b>0, c\geq 0$, we have that $-a|s|^{2q_2}-(ab+c)\Re(s^{q_2})-bc\leq 0$ and we obtain that $\Re(s^{q_1})\leq 0$, which contradicts $\Re(s^{q_1})>0$. In conclusion, the equation $\Delta(s)=0$ does not have any roots with $\Re(s)\geq 0$.
	
\noindent 3.(b) $\Delta(s)$ has a pair of pure imaginary roots if and only if there exist $\omega>0$ such that $\Delta(i\omega)=0$. As $i^q=\cos \frac{q\pi}{2}+i\sin\frac{q\pi}{2}$, taking the real and the imaginary parts of the equation $\Delta(i\omega)=0$, we obtain:
	\begin{equation}\label{sistemrealimaginar}
	\begin{cases}
	&\omega^{q_1+q_2}\cos\frac{(q_1+q_2)\pi}{2}+a\omega^{q_2}\cos\frac{q_2\pi}{2}+b\omega^{q_1}\cos \frac{q_1\pi}{2}+c=0\\	&
    \omega^{q_1+q_2}\sin\frac{(q_1+q_2)\pi}{2}+a\omega^{q_2}\sin\frac{q_2\pi}{2}+b\omega^{q_1}\sin \frac{q_1\pi}{2}=0
	\end{cases}
	\end{equation}
Solving this system for $a$ and $b$, using the notations from Lemma \ref{lem.a.star}, it follows that  $\Delta(s)$ has a pair of pure imaginary roots if and only if $(b,a)\in\Gamma_{c,q_1,q_2}$, or equivalently $a=a^\star_{c,q_1,q_2}(b):=a^\star(b,c,q_1,q_2)$.
	
\noindent 3.(c)
Let $s(a,b,c,q_1,q_2)$ be the root of $\Delta(s)$ such that $s(a^\star,b,c,q_1,q_2)=i\omega$, with $\omega>0$, where $a^\star=a^\star(b,c,q_1,q_2)$. Differentiating with respect to $a$ in the equation
	$$s^{q_1+q_2}+as^{q_2}+bs^{q_1}+c=0$$
we obtain
	$$(q_1+q_2)s^{q_1+q_2-1}\frac{\partial s}{\partial a}+s^{q_2}+aq_2s^{q_2-1}\frac{\partial s}{\partial a}+bq_1s^{q_1-1}\frac{\partial s}{\partial a}=0$$
	which is equivalent to
	$$\frac{\partial s}{\partial a}=\frac{-s^{q_2}}{(q_1+q_2)s^{q_1+q_2-1}+aq_2s^{q_2-1}+bq_1s^{q_1-1}}.$$
Therefore
	$$\frac{\partial \Re(s)}{\partial a}=\Re \left(\frac{\partial s}{\partial a}\right)=\Re\left( \frac{-s^{q_2}}{(q_1+q_2)s^{q_1+q_2-1}+aq_2s^{q_2-1}+bq_1s^{q_1-1}} \right).$$
which leads to
	$$\frac{\partial \Re(s)}{\partial a}\Big|_{a=a^*}=\Re\left( \frac{-\left(i\omega\right)^{q_2}}{(q_1+q_2)\left(i\omega\right)^{q_1+q_2-1}+a^*q_2\left(i\omega\right)^{q_2-1}+bq_1\left(i\omega\right)^{q_1-1}} \right).$$
	Denoting $P(\omega)=(q_1+q_2)\left(i\omega\right)^{q_1+q_2-1}+a^*q_2\left(i\omega\right)^{q_2-1}+bq_1\left(i\omega\right)^{q_1-1}$, we have
	\begin{equation}\label{real}
	\frac{\partial \Re(s)}{\partial a}\Big|_{a=a^*}=\Re \left( \frac{-\left(i\omega\right)^{q_2}}{P(\omega)} \right)=\omega^{q_2}\Re \left( \frac{-i^{q_2}\overline{P(\omega)}}{|P(\omega)|^2} \right)=-\frac{\omega^{q_2}}{|P(\omega)|^2}\cdot \Re\left(i^{q_2}\overline{P(\omega)}\right)
	\end{equation}
	In what follows, we compute $\Re\left(i^{q_2}\overline{P(\omega)}\right)$ and we obtain:
	\begin{align*}
	\Re\left(i^{q_2}\overline{P(\omega)}\right)&=\Re \left(\overline{i}^{q_2} P(\omega)\right)\\
	&=\Re \Big[ (q_1+q_2)\overline{i}^{q_2} i^{q_1+q_2-1}\omega^{q_1+q_2-1}+a^*q_2 \overline{i}^{q_2} i^{q_2-1}\omega^{q_2-1}+bq_1\overline{i}^{q_2} i^{q_1-1}\omega^{q_1-1} \Big]\\
	&=\omega^{q_1-1}\Re \Big[ (q_1+q_2)i^{q_1-1}\omega^{q_2}-a^\star q_2 i\omega^{q_2-q_1}+b q_1\overline{i}^{q_2}i^{q_1-1} \Big]\\
	&=\omega^{q_1-1} \Big[ (q_1+q_2)\omega^{q_2}\Re(i^{q_1-1})+b q_1\Re(\overline{i}^{q_2}i^{q_1-1}) \Big]\\
	&=\omega^{q_1-1}\Big[ (q_1+q_2)\omega^{q_2}\sin \frac{q_1\pi}{2}-b q_1\sin \frac{(q_2-q_1)\pi}{2} \Big]\\
	&=\omega^{q_1-1}\sin \frac{(q_2-q_1)\pi}{2}\Big[ (q_1+q_2)\omega^{q_2}\rho_1-q_1(\rho_1\omega^{q_2}-c\rho_2\omega^{-q_1})\Big]\\
    &=\omega^{q_1-1}\sin \frac{(q_2-q_1)\pi}{2}\Big[ q_2\rho_1\omega^{q_2}+c~q_1\rho_2\omega^{-q_1})\Big]>0
	\end{align*}
From (\ref{real}) it results that 
	$$\frac{\partial \Re(s)}{\partial a}\Big|_{a=a^*}<0.$$
	
	\noindent 3.(d,e) From the transversality condition obtained at 3.(c), we observe that $\Re(s)$ is decreasing in a neighborhood of $a^*$, so when $a$ decreases below the critical value $a^*=a^*(b,c,q_1,q_2)$, the pair of conjugated roots $(s,\overline{s})$ crosses the imaginary axis from the left half-plane to the right half-plane. Therefore, taking into consideration 3.(a), we obtain the conclusions.
\end{proof}	

Furthermore, sufficient stability conditions which do not depend on the fractional orders $q_1$ and $q_2$ will be obtained using the following:

\begin{proposition}\label{prop113}
	Let $c>0$ and $0<q_1<q_2<1$. For the complex-valued function $\Delta(s)$ defined in Proposition \ref{prop112} we have:
	\begin{enumerate}
		\item If $a+1>0$, $a+b>0$ and $b+c>0$, then all roots of $\Delta(s)$ are in the open left-half plane, regardless of $q_1$ and $q_2$.
		\item If $a+b+c+1\leq 0$ then the equation $\Delta(s)$ has at least one positive real root, regardless of $q_1$ and $q_2$.
	\end{enumerate}
\end{proposition}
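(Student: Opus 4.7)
The plan is to reduce both parts to the statements already proved. Part 2 should be essentially immediate from the intermediate value theorem: for $s>0$ the function $\Delta(s)=s^{q_1+q_2}+as^{q_2}+bs^{q_1}+c$ is continuous and real-valued with $\Delta(1)=1+a+b+c\leq 0$ and $\Delta(s)\to +\infty$ as $s\to\infty$, so a positive real root exists in $[1,\infty)$, independently of $q_1,q_2$.

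For part 1 the plan is to invoke Proposition \ref{prop112}, part 3.(d): since $c>0$, all roots of $\Delta(s)$ lie in the open left half-plane if and only if $a>a^\star(b,c,q_1,q_2)$. So it suffices to verify $a>a^\star(b,c,q_1,q_2)$ under the hypotheses $a+1>0$, $a+b>0$, $b+c>0$, and this should follow from a short case analysis based on the sign of $b$, using the explicit upper bounds on $a^\star$ provided by Lemma \ref{lem.a.star}.

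The case split I would use is the following. When $b\geq 0$, the relevant bound is $a^\star(b)\leq -b^{q_1/q_2}$; I would then subdivide further: for $0\leq b\leq 1$ the fact that $q_1/q_2\in(0,1)$ gives $b^{q_1/q_2}\geq b$, hence $-b\geq -b^{q_1/q_2}\geq a^\star(b)$, and the hypothesis $a+b>0$ closes it; for $b\geq 1$ the same exponent argument gives $b^{q_1/q_2}\geq 1$, hence $-1\geq -b^{q_1/q_2}\geq a^\star(b)$, and the hypothesis $a+1>0$ closes it. When $b<0$, the relevant bound is $a^\star(b)\leq (-b)^{q_2/q_1}c^{1-q_2/q_1}=c\cdot(-b/c)^{q_2/q_1}$. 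The hypothesis $b+c>0$ gives $0<-b/c<1$, and since $q_2/q_1>1$ we get $(-b/c)^{q_2/q_1}<-b/c$, so $a^\star(b)<-b$; then $a+b>0$ yields $a>-b>a^\star(b)$.

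The only potential obstacle is the case $b<0$, where one has to argue that the bound on $a^\star$ with the awkward exponent $q_2/q_1>1$ is still strictly dominated by $-b$; the key observation making this work is precisely $b+c>0$, which forces $-b/c<1$ and turns the large exponent into a shrinking factor. Once those three subcases are assembled, 3.(d) of Proposition \ref{prop112} delivers the conclusion uniformly in $q_1,q_2$, finishing the proof.
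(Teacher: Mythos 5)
Your proposal is correct and follows essentially the same route as the paper: Part 2 via $\Delta(1)=1+a+b+c\leq 0$ and the intermediate value theorem, and Part 1 by verifying $a>a^\star(b,c,q_1,q_2)$ through the bounds of Lemma \ref{lem.a.star} and then invoking Proposition \ref{prop112} 3.(d). Your explicit subcases for $b\geq 0$ are just the unpacked form of the paper's $a>-\min\{b,1\}\geq -b^{q_1/q_2}$ step, and your $b<0$ argument is an algebraic rearrangement of the paper's chain $-b=(-b)^{q_2/q_1}(-b)^{1-q_2/q_1}\geq(-b)^{q_2/q_1}c^{1-q_2/q_1}$.
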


\begin{proof}
\noindent 1. Assume that  $a+1>0$, $a+b>0$ and $b+c>0$. 

If $b\geq 0$, then $a>-1$ and $a>-b$ and inequality \ref{ineg.a.star} from Lemma \ref{lem.a.star} implies that
$$a> -\min\{b,1\}\geq -b^{\frac{q_1}{q_2}}\geq a^\star(b,c,q_1,q_2).$$

If $b<0$, we have $a>-b$ and $c>-b$ and inequality \ref{ineg.a.star} from Lemma \ref{lem.a.star} provides:
$$a> -b=(-b)^{\frac{q_2}{q_1}}(-b)^{1-\frac{q_2}{q_1}}\geq (-b)^{\frac{q_2}{q_1}}c^{1-\frac{q_2}{q_1}}\geq a^\star(b,c,q_1,q_2).$$

Hence, in both cases, based on Proposition \ref{prop112}.(d) it follows that all the roots of $\Delta(s)$ are in the open left half-plane.
 			
\noindent 2. As $\Delta(1)=1+a+b+c\leq 0$ and $\Delta(\infty)=\infty$, the function $\Delta(s)$ has at least one positive real root belonging to the interval $[1,\infty)$.
\end{proof}

Based on Theorem \ref{thm.lin.stab} and Propositions \ref{prop112} and \ref{prop113}, the following conditions for the stability of system (\ref{linearsys}) are obtained, with respect to its coefficients and the fractional orders $q_1,q_2$:

\begin{corollary}\label{cor.stab.lin} Consider the linear system (\ref{linearsys}) with $q_1,q_2\in(0,1)$, $q_1<q_2$, the fractional orders of the Caputo derivatives. Denoting $a=-a_{11}$, $b=-a_{22}$, $c=\det(A)$, the following hold:
	\begin{enumerate}
		\item If $c<0$, system (\ref{linearsys}) is unstable, regardless of the fractional orders $q_1$ and $q_2$.
		\item Assume that $c>0$ and $q_1,q_2$ are arbitrarily fixed. Consider $a^\star(b,c,q_1,q_2)$ given by Lemma \ref{lem.a.star}.
		\begin{itemize}
			\item[(a)] System (\ref{linearsys}) is $\mathcal{O}(t^{-q_1})$-asymptotically stable if and only if $a>a^\star(b,c,q_1,q_2)$.
			\item[(b)] If $a<a^\star(b,c,q_1,q_2)$ system (\ref{linearsys}) is unstable.
\end{itemize}
\item If $c>0$, the following sufficient conditions for the asymptotic stability and instability of system (\ref{linearsys}), independent of the fractional orders $q_1$, $q_2$ are obtained:
\begin{itemize}
            \item[(a)] If $a+1>0$, $a+b>0$ and $b+c>0$ system (\ref{linearsys}) is asymptotically stable, regardless of the fractional orders $q_1$ and $q_2$.
			
			\item[(b)] If $a+b+c+1\leq 0$  then system (\ref{linearsys}) is unstable, regardless of the fractional orders $q_1$ and $q_2$.
		\end{itemize}
	\end{enumerate}
\end{corollary}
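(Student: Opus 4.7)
The plan is to reduce the corollary to a bookkeeping exercise that combines Theorem \ref{thm.lin.stab} with Propositions \ref{prop112} and \ref{prop113}. The bridge is the substitution $a=-a_{11}$, $b=-a_{22}$, $c=\det(A)$, which identifies the characteristic function $\Delta_A(s)=s^{q_1+q_2}-a_{11}s^{q_2}-a_{22}s^{q_1}+\det(A)$ of system (\ref{linearsys}) with the function $\Delta(s)=s^{q_1+q_2}+as^{q_2}+bs^{q_1}+c$ analyzed in the preceding propositions. Under this identification, $\det(A)=c$, so the nondegeneracy hypothesis $\det(A)\neq 0$ required for part 2 of Theorem \ref{thm.lin.stab} is simply $c\neq 0$, which will hold in every instability case we invoke.

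For item 1, $c<0$ gives $\det(A)\neq 0$, and Proposition \ref{prop112}.1 supplies a positive real root of $\Delta_A$, so Theorem \ref{thm.lin.stab}.2 delivers instability. For item 2, since $c>0$ we have $\det(A)\neq 0$: Proposition \ref{prop112}.3(d) states that all roots of $\Delta_A$ lie in the open left half-plane iff $a>a^\star(b,c,q_1,q_2)$, and combining with Theorem \ref{thm.lin.stab}.1 yields (a), with the $\mathcal{O}(t^{-q_1})$ rate arising from $q_1=\min\{q_1,q_2\}$; for (b), Proposition \ref{prop112}.3(e) provides a pair of roots in the open right half-plane when $a<a^\star$, whence Theorem \ref{thm.lin.stab}.2 yields instability.

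For item 3 we use the $q_1,q_2$-independent estimates from Proposition \ref{prop113}. Part 1 of that proposition, under the hypotheses $a+1>0$, $a+b>0$, $b+c>0$, places all roots of $\Delta_A$ in the open left half-plane uniformly in the fractional orders, and Theorem \ref{thm.lin.stab}.1 then gives the uniform asymptotic stability in (a). Symmetrically, part 2 of Proposition \ref{prop113} produces, under $a+b+c+1\leq 0$, a positive real root of $\Delta_A$ independent of $q_1,q_2$; since $c>0$ ensures $\det(A)\neq 0$, Theorem \ref{thm.lin.stab}.2 yields the uniform instability in (b).

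There is no substantive obstacle here: the sole thing to confirm is that each hypothesis of each invoked proposition transports correctly through the substitution, and that whenever Theorem \ref{thm.lin.stab}.2 is applied the condition $\det(A)\neq 0$ is in force (it is, via $c<0$ in item 1 and $c>0$ in items 2 and 3).
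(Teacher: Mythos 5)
Your proposal is correct and follows exactly the route the paper intends: the paper gives no separate proof of the corollary, stating only that it follows from Theorem \ref{thm.lin.stab} and Propositions \ref{prop112} and \ref{prop113} via the substitution $a=-a_{11}$, $b=-a_{22}$, $c=\det(A)$, which is precisely the bookkeeping you carry out. Your explicit check that $\det(A)\neq 0$ holds in each instability case is a worthwhile detail the paper leaves implicit.
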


\begin{ex}
Let us consider $c=4$, $q_1=0.4$ and $q_2=0.8$. The curve $\Gamma_{c,q1,q2}$ given by Lemma \ref{lem.a.star}, which represents the graph of the function $a^\star(b,c,q_1,q_2)$ is shown in Fig. \ref{fig.stab}. System (\ref{linearsys}) is $\mathcal{O}(t^{-0.4})$-asymptotically stable if and only if the parameters $(a,b)$ take values above the plotted curve $\Gamma_{c,q1,q2}$, based on Corollary \ref{cor.stab.lin}. On the other hand, if $(a,b)$ take values below the curve $\Gamma_{c,q1,q2}$, system (\ref{linearsys}) is unstable. For any values of $(q_1,q_2)$, if $(a,b)$ belong to the red shaded region shown in \ref{fig.stab}, system (\ref{linearsys})  is asymptotically stable. For any values of $(q_1,q_2)$, if $(a,b)$ belong to the blue shaded region shown in \ref{fig.stab}, system (\ref{linearsys})  is unstable. 

\begin{figure}[h]
	\centerline{
	\includegraphics[width=0.5\linewidth]{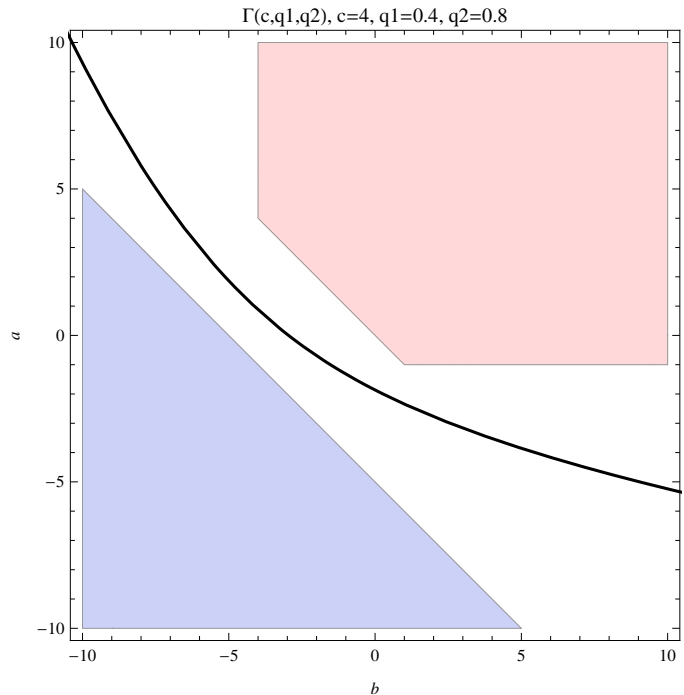}}
	\caption{Curve $\Gamma_{c,q1,q2}$ given by Lemma \ref{lem.a.star}, for fixed values of $c=4$, $q_1=0.4$ and $q_2=0.8$. The red/blue shaded regions represent the combination of parameters $(a,b)$ for which system (\ref{linearsys}) is asymptotically stable /unstable, regardless of the fractional orders $q_1$ and $q_2$.}
	\label{fig.stab}
\end{figure}

\end{ex}

\section{Investigation of a fractional-order FitzHugh-Nagumo model}

The FitzHugh-Nagumo neuronal model \cite{FitzHugh} is a simplification of the well-known Hodgkin-Huxley model, which describes a biological neuron's spiking behavior. In this paper, we consider an extension of the classical FitzHugh-Nagumo model, by replacing the integer-order derivatives by fractional-order Caputo derivatives:

\begin{equation}\label{FN1}
\left\{
\begin{array}{l}
^c\!D^{q_1}v(t)=v-\dfrac{v^3}{3}-w+I\\
^c\!D^{q_2}w(t)=r(v+c-dw)
\end{array}
\right.
\end{equation}
where $v$ represents the membrane potential, $w$ is a recovery variable, $I$ is an external excitation current and $0< q_1\leq q_2\leq 1$. A similar model has been investigated by means of numerical simulations in  \cite{Armanyos2016fractional}.

The second equation of system (\ref{FN1}) can be rewritten as follows:
$$^c\!D^{q_2}w(t)=rd\Big(\frac{1}{d}v+\frac{c}{d}-w\Big)=\phi(\alpha v+\beta -w)$$
where $\phi=rd\in (0,1)$, $\alpha=\dfrac{1}{d}>1$ and $\beta=\dfrac{c}{d}$. Therefore, system (\ref{FN1}) is equivalent to the following two-dimensional conductance-based model:
\begin{equation}\label{FN2}
\begin{cases}
 ^c\!D^{q_1}v(t)=I-I(v,w)\\
 ^c\!D^{q_2}w(t)=\phi(w_{\infty}(v)-w)
\end{cases}
\end{equation}
where $I(v,w)=w-v+\dfrac{v^3}{3}$ and $w_{\infty}(v)=\alpha v+\beta $ is a linear function.

The equilibrium states of the fractional-order neuronal model (\ref{FN2}) are the solutions of the following algebraic system
$$\begin{cases}
 I=I_\infty(v)\\
 w=w_{\infty}(v)
\end{cases}$$
where
\begin{align*}
I_{\infty}(v)=I(v,w_{\infty}(v))=w_{\infty}(v)-v+\frac{v^3}{3}=(\alpha-1)v+\dfrac{v^3}{3}+\beta.
\end{align*}
We observe that $I_\infty\in C^1$, $\lim\limits_{v\to -\infty}I_\infty(v)=-\infty$ and $\lim\limits_{v\to \infty}I_\infty(v)=\infty$. Moreover, 
$I'_{\infty}(v)=v^2+\alpha-1.$

As it is assumed that $\alpha>1$, it follows that $I'_{\infty}(v)=v^2+\alpha-1>0$, so the function $I_\infty$ is increasing and, as it is also continuous, it results that $I_\infty$ is bijective. Therefore, there exists a unique solution for the equation $I_\infty(v)=I$, which we denote by $v^*=v^*(I,\alpha,\beta)$.

For the investigation of the stability of equilibrium states, we consider the Jacobian matrix associated to system (\ref{FN2}) at an arbitrary equilibrium state $(v^*,w^*)=(v^*,w_{\infty}(v^*))$:
$$J=\begin{bmatrix}
1-(v^*)^2 & -1\\
\phi ~\alpha & -\phi
\end{bmatrix}$$
The characteristic equation at the equilibrium state $(v^*,w^*)$ is
\begin{equation}\label{car.eq}
s^{q_1+q_2}+a(v^*)s^{q_2}+b(v^*)s^{q_1}+c(v^*)=0
\end{equation}
where
\begin{align*}
& a(v^*)=-1+(v^*)^2\\
& b(v^*)=\phi >0 \\
& c(v^*)=\det(J)=\phi\cdot I'_{\infty}(v^*)>0.
\end{align*}

\begin{proposition}
	Any equilibrium state $(v^*,w^*)$ with $|v^*|>\sqrt{1-\phi}$ is asymptotically stable, regardless of the fractional orders $q_1$ and $q_2$.
\end{proposition}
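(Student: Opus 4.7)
The plan is to apply Corollary \ref{cor.stab.lin} part 3(a) to the linearization of (\ref{FN2}) at the equilibrium $(v^*,w^*)$, whose characteristic function (\ref{car.eq}) has coefficients $a(v^*)=(v^*)^2-1$, $b(v^*)=\phi$, and $c(v^*)=\phi\, I'_\infty(v^*)$. That corollary guarantees asymptotic stability of the linear system (\ref{linearsys}) independently of $q_1,q_2$ as soon as $c>0$, $a+1>0$, $a+b>0$, and $b+c>0$. Combined with the standard linearization principle for fractional-order autonomous systems referenced in the introduction, establishing these four sign conditions for the Jacobian at $(v^*,w^*)$ is enough to conclude.

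The verification is direct. The positivity $c(v^*)>0$ has already been recorded below (\ref{car.eq}), since $I'_\infty(v^*)=(v^*)^2+\alpha-1>0$ follows from $\alpha>1$. Because $\phi\in(0,1)$, the quantity $\sqrt{1-\phi}$ is strictly positive, so the hypothesis $|v^*|>\sqrt{1-\phi}$ forces $v^*\neq 0$ and in particular
\[
a(v^*)+1=(v^*)^2>0.
\]
Squaring the hypothesis gives $(v^*)^2>1-\phi$, which rearranges to
\[
a(v^*)+b(v^*)=(v^*)^2-1+\phi>0;
\]
this is the only place where the quantitative hypothesis on $|v^*|$ is actually used. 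Finally,
\[
b(v^*)+c(v^*)=\phi\bigl(1+I'_\infty(v^*)\bigr)>0
\]
because $\phi>0$ and $I'_\infty(v^*)>0$.

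All hypotheses of Corollary \ref{cor.stab.lin} part 3(a) being satisfied, the linearized system is asymptotically stable for every $q_1,q_2\in(0,1)$, which delivers the proposition. There is no real analytic obstacle: the substantive observation is that the stated bound $|v^*|>\sqrt{1-\phi}$ is tailored precisely to produce the inequality $a+b>0$ in the characteristic polynomial, while the remaining sign conditions of the fractional-order-independent criterion follow automatically from the structural constraints $\phi\in(0,1)$ and $\alpha>1$ built into the model.
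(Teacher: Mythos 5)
Your proposal is correct and follows essentially the same route as the paper: both apply Corollary \ref{cor.stab.lin} part 3(a) to the characteristic function (\ref{car.eq}) and verify the same three sign conditions $a(v^*)+1=(v^*)^2>0$, $a(v^*)+b(v^*)=(v^*)^2-1+\phi>0$, and $b(v^*)+c(v^*)=\phi(1+I'_\infty(v^*))>0$, with the hypothesis $|v^*|>\sqrt{1-\phi}$ used exactly where you use it. Your write-up is if anything slightly more explicit than the paper's (noting $v^*\neq 0$ and the role of the linearization principle), but the argument is the same.
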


\begin{proof}
As the function $I_\infty$ is increasing and
\begin{align*}    
& a(v^*)+1=(v^*)^2>0;\\
& a(v^*)+b(v^*)=-1+(v^*)^2+\phi>0;\\
& b(v^*)+c(v^*)=\phi+\phi\cdot I'_{\infty}(v^*)>0;
   \end{align*}
it follows from Corollary \ref{cor.stab.lin} 3.a, that  the equilibrium state $(v^*,w^*)$ is asymptotically stable, regardless of the fractional orders $q_1$ and $q_2$.
\end{proof}

The stability of any equilibrium state $(v^*,w^*)$ with $|v^*|\leq \sqrt{1-\phi}$ depends on the fractional orders $q_1$ and $q_2$ (see Figs. \ref{fig.branch} and \ref{fig.regiuni}).

\begin{figure}[h]
	\centerline{
	\includegraphics[width=0.6\linewidth]{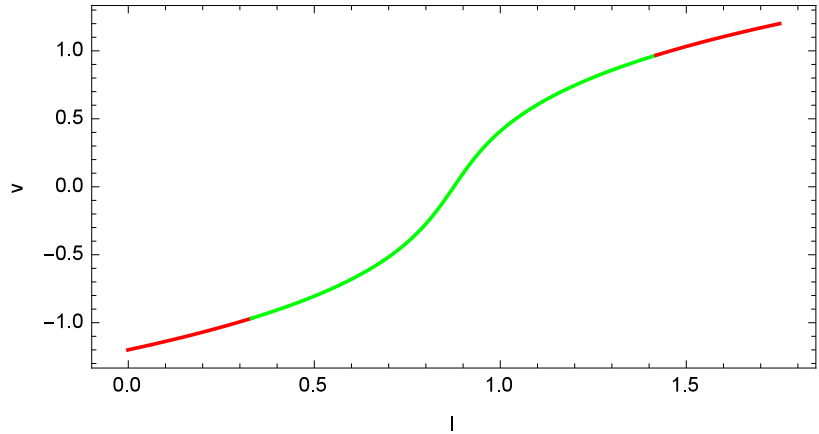}}
	\caption{Membrane potential ($v^*$) of the equilibrium states $(v^*,w^*)$ of system (\ref{FN1}) (with parameter values: $r = 0.08$, $c = 0.7$, $d = 0.8$) with respect to the external excitation current $I$ and their stability: red represents asymptotic stability, regardless of  the fractional orders $q_1$ and $q_2$; green represents equilibrium states whose stability depends on the fractional orders $q_1$ and $q_2$.}
	\label{fig.branch}
\end{figure}

\begin{figure}[h]
	\centering
	\includegraphics*[width=0.95\linewidth]{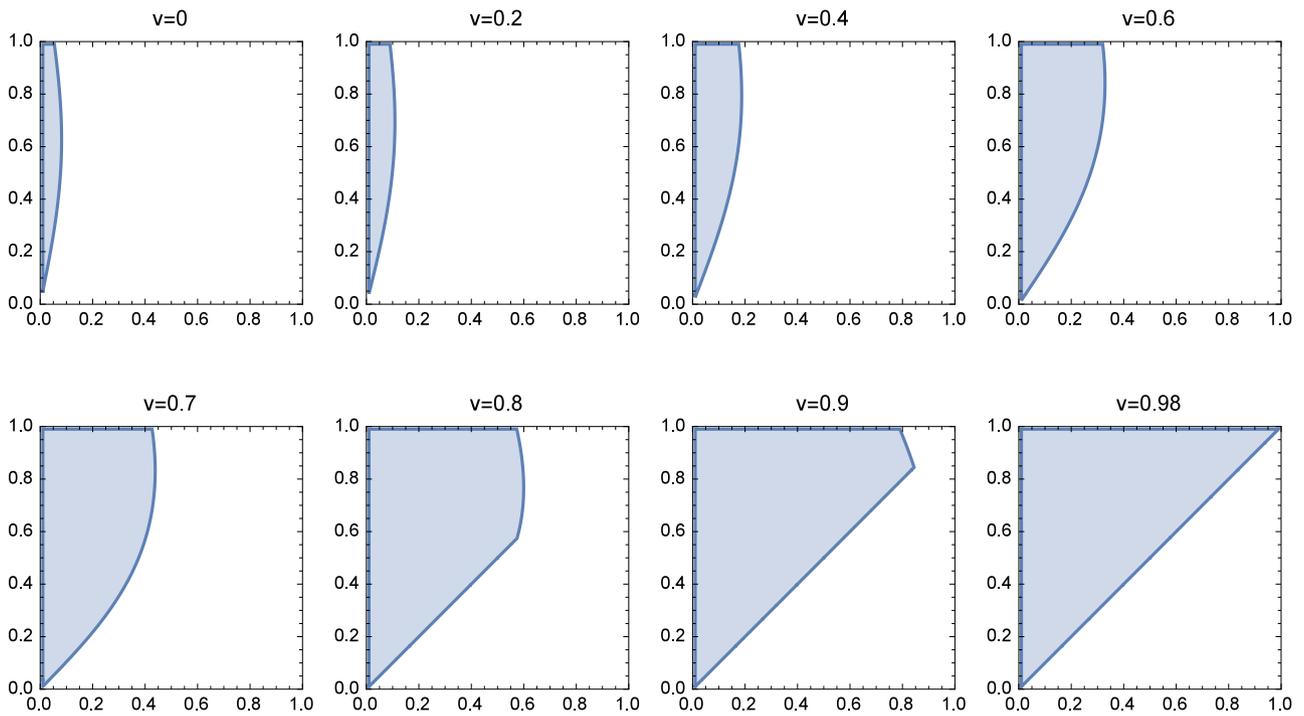}
	\caption{Stability regions (shaded) in the $(q_1,q_2)$-plane for equilibrium states $(v^*,w^*)$ of system (\ref{FN1}) (with parameter values: $r = 0.08$, $c = 0.7$, $d = 0.8$), with different values of the membrane potential $v^*$ satisfying the inequality $|v^*|\leq \sqrt{1-\phi}\approx 0.98$. In each case, the part of the blue curve strictly above the first bisector represents the Hopf bifurcation curve in the $(q_1,q_2)$-plane.}
	\label{fig.regiuni}
\end{figure}

Let us know consider an arbitrarily fixed equilibrium state $(v^*,w^*)$ of system (\ref{FN1}), such that $|v^*|\leq \sqrt{1-\phi}$. According to Proposition \ref{prop112}, at the critical values of the fractional orders $(q_1^\star,q_2^\star)$ defined implicitly by the equality $$a(v^\star)=a^\star(b(v^\star),c(v^\star),q_1,q_2),$$ a Hopf bifurcation is expected to occur (see Fig. \ref{fig.regiuni}).

Indeed, considering the following values for the system parameters: $r = 0.08$, $c = 0.7$, $d = 0.8$ and $I=1.24567$, the equilibrium state is $(v^*,w^*)=(0.8,1.875)$. In Fig. \ref{fig.spikes}, the evolution of the state variables is shown, considering an initial condition in a small neighborhood of the equilibrium point. For a fixed value $q_2=0.8$, the critical value of the fractional order $q_1$ for which a Hopf bifurcation occurs is $q_1^*=0.599$. Indeed, for $q_1=0.58$, asymptotically stable behavior is observed. For $q_1=0.63$, numerical simulations show quasi-periodic behavior, corresponding to the existence of a stable limit cycle. As $q_1$ is increased, the frequency of the oscillations increases. Numerical simulations suggest that fractional-order versions of the FitzHugh-Nagumo system provide a more realistic modeling of individual spikes than the corresponding integer-order counterpart (as seen in the last image from Fig. \ref{fig.spikes}).

\begin{figure}[htbp]
	\centering
	\includegraphics*[width=0.6\linewidth]{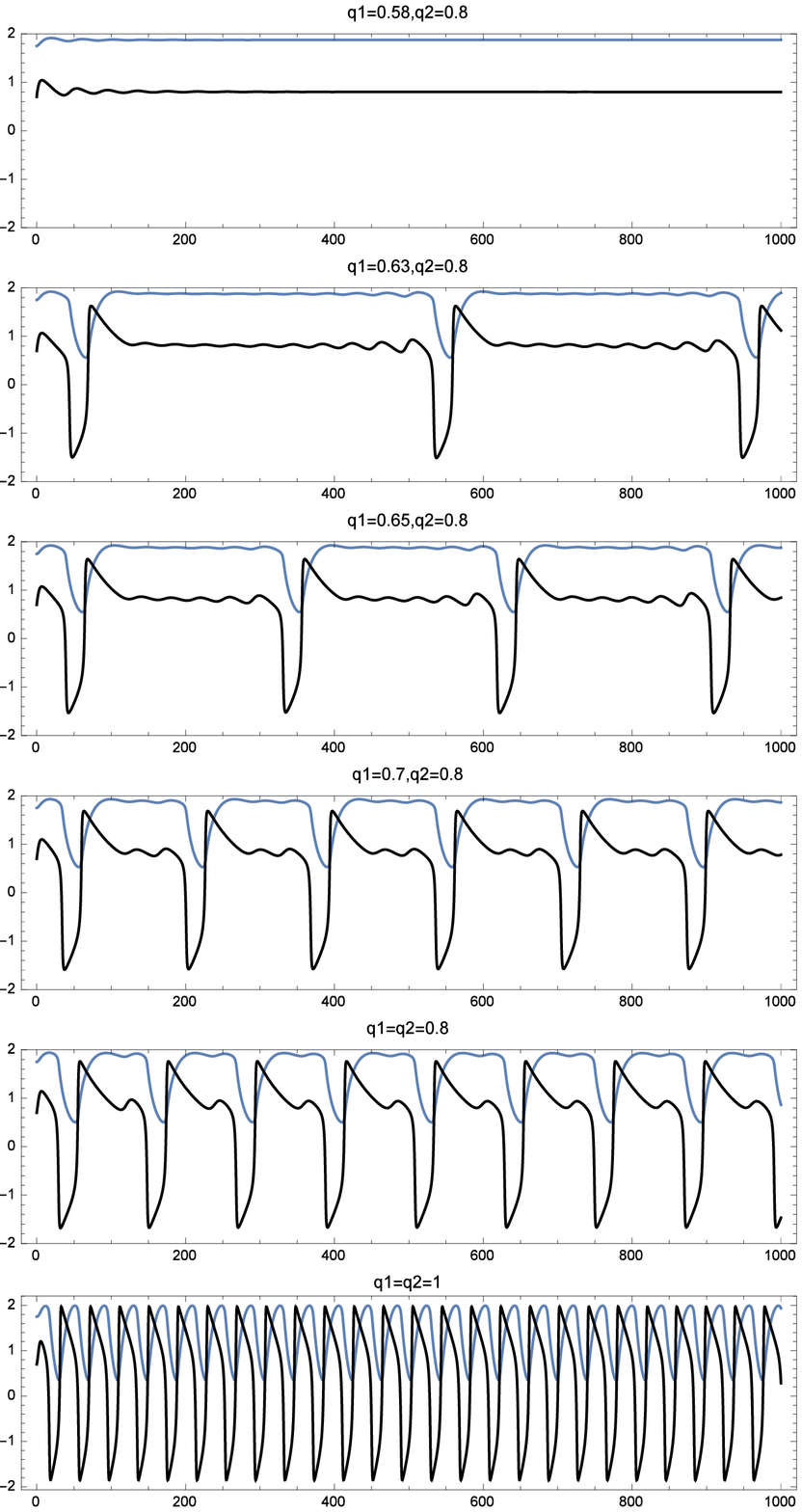}
	\caption{Evolution of the state variables of system (\ref{FN1}) (with parameter values: $r = 0.08$, $c = 0.7$, $d = 0.8$ and $I=1.24567$)   for different values of the fractional orders.}
	\label{fig.spikes}
\end{figure}

\section{Conclusions}

Necessary and sufficient conditions have been obtained for the asymptotic stability of a two-dimensional incommensurate order linear autonomous system with Caputo derivatives of different fractional orders. These results can be regarded as a generalization of the classical Routh-Hurwitz stability conditions. As an application, the stability properties of a fractional-order FitzHugh-Nagumo system have been explored. Numerical simulations are provided to exemplify the theoretical findings, additionally revealing the occurrence of Hopf bifurcations when critical values of the fractional orders are encountered.

%
%

\bibliography{bibliografie,fractional_applications}

\end{document}